\definecolor{verylight}{gray}{0.97}
\definecolor{light}{gray}{0.9}
\definecolor{medium}{gray}{0.85}
\definecolor{dark}{gray}{0.6}
 \def\frk{\frak}               
 \def\mm{{\frk m}}
 \def\F{{\mathcal F}}
 \def\opn#1#2{\def#1{\operatorname{#2}}} 
 \opn\chara{char} \opn\length{\ell} \opn\pd{pd} \opn\rk{rk}
 \opn\projdim{proj\,dim} \opn\injdim{inj\,dim} \opn\rank{rank}
 \opn\depth{depth} \opn\grade{grade} \opn\height{height}
 \opn\embdim{emb\,dim} \opn\codim{codim}
 \opn\Tr{Tr} \opn\bigrank{big\,rank}
 \opn\superheight{superheight}\opn\lcm{lcm}
 \opn\trdeg{tr\,deg}
 \opn\reg{reg} \opn\lreg{lreg} \opn\ini{in} \opn\lpd{lpd}
 \opn\size{size} \opn\sdepth{sdepth}
 \opn\link{link}\opn\fdepth{fdepth}\opn\lex{lex}
 \opn\div{div} \opn\Div{Div} \opn\cl{cl} \opn\Cl{Cl}
 \opn\Spec{Spec} \opn\Supp{Supp} \opn\supp{supp} \opn\Sing{Sing}
 \opn\Ass{Ass} \opn\Min{Min}\opn\Mon{Mon}
 \opn\Ann{Ann} \opn\Rad{Rad} \opn\Soc{Soc}
 \opn\Im{Im} \opn\Ker{Ker} \opn\Coker{Coker} \opn\Am{Am}
 \opn\Hom{Hom} \opn\Tor{Tor} \opn\Ext{Ext} \opn\End{End}
 \opn\Aut{Aut} \opn\id{id}
 \opn\nat{nat}
 \opn\pff{pf}
 \opn\Pf{Pf} \opn\GL{GL} \opn\SL{SL} \opn\mod{mod} \opn\ord{ord}
 \opn\Gin{Gin} \opn\Hilb{Hilb}\opn\sort{sort}
 \opn\aff{aff} \opn
\opn\relint{relint} \opn\st{st}
 \opn\lk{lk} \opn\cn{cn} \opn\core{core} \opn\vol{vol}  \opn\inp{inp} \opn\nilpot{nilpot}
 \opn\link{link} \opn\star{star}\opn\lex{lex}\opn\set{set} \opn\rev{rev} \opn\ini{in}
 \opn\gr{gr}
 \def\pot#1#2{#1[\kern-0.28ex[#2]\kern-0.28ex]}
 \opn\dirlim{\underrightarrow{\lim}}
 \opn\inivlim{\underleftarrow{\lim}}
 \def\Implies{\ifmmode\Longrightarrow \else
         \unskip${}\Longrightarrow{}$\ignorespaces\fi}
 \def\implies{\ifmmode\Rightarrow \else
         \unskip${}\Rightarrow{}$\ignorespaces\fi}
 \def\iff{\ifmmode\Longleftrightarrow \else
         \unskip${}\Longleftrightarrow{}$\ignorespaces\fi}
 \newtheorem{Theorem}{Theorem}[section]
 \newtheorem{Lemma}[Theorem]{Lemma}
 \newtheorem{Proposition}[Theorem]{Proposition}
 \newtheorem{Definition}[Theorem]{Definition}
 \newtheorem{Question}[Theorem]{Question}
 \let\epsilon\varepsilon
 \let\kappa=\varkappa
 \def\qed{\ifhmode\textqed\fi
       \ifmmode\ifinner\quad\qedsymbol\else\dispqed\fi\fi}
 \def\textqed{\unskip\nobreak\penalty50
        \hskip2em\hbox{}\nobreak\hfil\qedsymbol
        \parfillskip=0pt \finalhyphendemerits=0}
 \def\dispqed{\rlap{\qquad\qedsymbol}}
 \opn\dis{dis}
 \def\pnt{{\raise0.5mm\hbox{\large\bf.}}}
 \opn\Lex{Lex}
\begin{document}

 \title {Koszul binomial edge ideals of  pairs of graphs }

 \author {Herolistra Baskoroputro, Viviana Ene, Cristian Ion}

\address{Herolistra Baskoroputro, Abdus Salam School of Mathematical Sciences (GC University Lahore), 68-B New Muslim Town, Lahore 54600, Pakistan} \email{h.baskoroputro@gmail.com}

\address{Viviana Ene, Faculty of Mathematics and Computer Science, Ovidius University, Bd.\ Mamaia 124,
 900527 Constanta, Romania} \email{vivian@univ-ovidius.ro}

\address{Cristian Ion, University Dun\u area de Jos of Gala\c ti, Romania} \email{cristian.adrian.ion@gmail.com}

\thanks{Part of this paper was done while the first author visited Constanta. He would like to thank the Faculty of Mathematics and Computer Science of the Ovidius University of Constanta for  hospitality. The visit of the author to Constanta was supported by ASSMS and mainly funded by HEC Pakistan.}

\subjclass[2010]{05E40, 13P10, 13D02}

\keywords{Koszul algebra, Gr\"obner basis, binomial edge ideal, linear quotients}

\maketitle

\begin{abstract}
We study the Koszul property of a standard graded $K$-algebra $R$ defined by the binomial edge ideal  of a pair of graphs 
$(G_1,G_2)$. We show that the following statements are equivalent: (i) $R$ is Koszul; (ii) the defining ideal $J_{G_1,G_2}$ of $R$ has a quadratic Gr\"obner basis; (iii)  the graded maximal ideal of $R$ has linear quotients with respect to a suitable order of its generators.
\end{abstract}

\section{Introduction}

A standard graded  $K$--algebra $R$ is Koszul if its residual field $K$ has a linear resolution over $R.$ Koszul algebras occur frequently in combinatorial and geometric contexts. They were first introduced and studied  by Priddy \cite{P}. For a  nice survey on fundamental  results and open questions regarding Koszul algebras, we refer the reader to \cite{CDR}.

Let $R=S/I$ where $S=K[x_1,\ldots,x_n]$ is the polynomial ring over a field $K$ and $I\subset S$ is a homogeneous ideal. It is well-known 
that if $I$ has a quadratic Gr\"obner basis with respect to a coordinate system of $S_1$ and some monomial order on $S,$ that is, $R$ is $G$--quadratic, then $R$ is Koszul. 
On the other hand, if $R$ is Koszul, then $I$ is generated by quadrics. If $I$ is generated by quadratic monomials, then $R=S/I$ is Koszul. 
For the proofs of all these statements, one may consult, for example, \cite[Section 6.1]{EH}. Therefore, we have the following implications:
\begin{center}
$I$ has a quadratic Gr\"obner basis $\Longrightarrow S/I$  is Koszul $\Longrightarrow I$ is generated by quadrics.
\end{center}

There are examples which show that none of the above implications can be reversed; see \cite{CDR}, \cite[Section 6.1]{EH},  and the references therein.

\medskip

Nice  Koszul algebras arising from combinatorics are the ones defined by binomial edge ideals. 

Let $G$ be a simple graph on the vertex set $[n]$ with the edge set $E(G)$ and $S_G=K[x_1,\ldots,x_n,y_1,\ldots,y_n]$ the polynomial ring in $2n$ variables over the field $K.$ For $1\leq i<j\leq n,$ we set $f_{ij}=x_iy_j-x_jy_i.$ The binomial edge ideal of $G$ is 
$$J_G=(f_{ij}: \{i,j\}\in E(G))\subset S_G.$$ Since $J_G$ is generated by quadrics, it is natural to classify the graphs $G$ with the property that the algebra 
defined by $J_G$ is Koszul. A graph $G$ is called Koszul (over $K$) if the $K$--algebra $R_G=S_G/J_G$ is so. Koszul graphs have been studied in \cite{EHH1}. In that paper, it was shown that the following implications hold:
\begin{eqnarray}\nonumber\label{eq2}
G \text{ is closed,\ equivalently, } J_G \text{ has a quadratic Gr\"obner basis } \\ \nonumber
 \Longrightarrow G\text{ is Koszul}\Longrightarrow G\text{ is chordal and claw-free}.
\end{eqnarray}

We recall the combinatorial definition of closed graphs in Section~\ref{one}.
In \cite{EHH1}, one finds examples which show that none of the above implications can be reversed. Furthermore, in \cite[Section 3]{EHH1}, a classification of the Koszul graphs with  clique complex of dimension at most two is given in pure combinatorial terms.

\medskip

Koszul filtrations were introduced in \cite{CTV}. Let $R$ be a standard graded $K$--algebra with graded maximal ideal $\mm.$ A Koszul filtration of $R$ is a family $\F$ of ideals of $R$ generated by linear forms with the following properties:
\begin{itemize}
	\item [(i)] $\F$ contains the zero ideal and the maximal ideal $\mm;$
	\item [(ii)] for every non-zero ideal $I\in \F$ there exists $J\in \F$ such that $J\subset I$ and $I/J$ is a cyclic module whose annihilator, namely $J:I,$ belongs to $\F$.
\end{itemize}

When the set of all ideals generated by subsets of variables form a Koszul filtration of $R,$ then $R$ is called c-universally Koszul.

In \cite[Proposition 1.2]{CTV} it was shown that all the ideals of $\F$ have a linear resolution. In particular, it follows that 
$R$ is Koszul if it admits a Koszul filtration. However, there are examples of Koszul algebras which do not possess a Koszul filtration; see \cite{CTV}. 

In \cite{EHH2} it was proved that if the defining ideal $I$ of the standard graded $K$--algebra $R=S/I$ has a quadratic Gr\"obner basis with respect to the reverse lexicographic order induced by the natural order of the variables, then, for every $i,$ the ideal quotient 
$(I,x_n,x_{n-1},\ldots,x_{i+1}):x_i$ is generated, modulo $I$, by linear forms. One may easily find examples which show that even if $I$ 
has a quadratic Gr\"obner basis with respect to the reverse lexicographic order, then the ideals $(\bar{x}_n,\ldots,\bar{x}_{i+1}):\bar{x}_i$
are not generated by variables. (Here $^-$ denotes the residue class modulo $I.$) However, in case  of algebras defined by binomial edge ideals, by 
\cite[Theorem 2.1]{EHH2}, all the ideals $(\bar{x}_n,\ldots,\bar{x}_{i+1}):\bar{x}_i$ in $R_G=S_G/J_G$ have linear quotients if and only if $G$ is a closed graph. 

\medskip

The goal of this paper is to study Koszul algebras defined by binomial edge ideals of  pairs of graphs. 

Let $m,n\geq 3$ be some integers and $G_1,G_2$ simple graphs on the vertex sets $[m]$ and $[n],$  with edge sets $E(G_1), E(G_2)$, respectively. Let $S=K[X]$ be the polynomial ring in the variables $x_{ij}$ where $1\leq i\leq m$ and $1\leq j\leq n.$ For 
$1\leq i<j\leq m$ and $1\leq k<\ell\leq n$ such that $e=\{i,j\}\in E(G_1)$ and $f=\{k,\ell\}\in E(G_2)$, $p_{ef}$ denotes the $2$--minor 
of the matrix $X=(x_{ij})_{\stackrel{1\leq i\leq  m}{1\leq j\leq n}}$ determined by the rows $i,j$ and the columns $k,\ell$ of $X.$ Thus,
\[
p_{ef}=[ij| k\ell]=x_{ik}x_{j \ell}-x_{i\ell} x_{jk}.
\]

The binomial edge ideal of the pair $(G_1,G_2)$ is defined as 
\[
J_{G_1,G_2}=(p_{ef}: e\in E(G_1), f\in E(G_2)).
\]

This ideal was introduced in \cite{EHHQ}. In  \cite[Theorem 1.3]{EHHQ}, it was shown that $J_{G_1,G_2}$ has a quadratic Gr\"obner basis with respect to the lexicographic order induced by $x_{11}>x_{12}>\cdots > x_{1n}>x_{21}>\cdots >x_{2n}>\cdots >x_{m1}>\cdots >x_{mn}$ if and only if one of the graphs is closed and the other one is complete. 

A pair of graphs $(G_1,G_2)$ is called {\em Koszul} if the algebra $R=S/J_{G_1,G_2}$ is Koszul. The above mentioned statement implies that 
if $G_1$ is closed and $G_2$ is complete or vice-versa, the pair $(G_1,G_2)$ is Koszul. The surprising fact is that the converse is also true. Actually, we prove even more. Namely, in Theorem~\ref{Koszul} we show that the following statements are equivalent:

\begin{itemize}
	\item [(i)]  The pair of graphs $(G_1,G_2)$ is Koszul;
	\item [(ii)] $G_1$ is closed and $G_2$ is complete or vice-versa;
	\item [(iii)] The graded maximal ideal of $R$ has linear quotients with respect to a suitable order of its generators.
\end{itemize}

The paper is organized as follows. In Section~\ref{one} we provide a combinatorial characterization of closed graphs in Theorem~\ref{closed} which is needed to prove Theorem~\ref{Koszul}. The statement of Theorem~\ref{closed} appeared in \cite[Theorem 3.6]{Cru} in an equivalent form. We give here a different proof to that given in \cite{Cru} which might be of interest for algebraists since it does not involve many concepts and results of combinatorics. Our proof essentially uses only Dirac's theorem on chordal graphs \cite{D}.

Section~\ref{two} contains the main theorem of this paper, namely Theorem~\ref{Koszul}, which provides various characterizations of the Koszul pairs of graphs. Finally, we show that $R=S/J_{G_1,G_2}$ is c-universally Koszul if and only if $G_1$ and $G_2$ are complete graphs.

\section{A combinatorial characterization of closed graphs}
\label{one}

Closed graphs were   considered in \cite{HHHKR}. We recall here the definition. 

Let $G$ be a simple graph with $n$ vertices and with edge set $E(G)$. The graph $G$ is called {\em closed} if there exists a labeling 
of its vertices with labels from $1$ to $n$ such that the following condition is fulfilled: for any $i<j<k$ or $i>j>k,$ if 
$\{i,j\}, \{i,k\}$ are edges of $G,$ then $\{j,k\}$ is an edge as well. In fact, as it was shown in \cite[Theorem  1.1]{HHHKR}, a given labeling of $G$ satisfies the above condition if and only if $J_G$ has a quadratic Gr\"obner basis with respect to the lexicographic order on $S_G$ induced by the natural order of the variables, that is, $x_1>x_2>\cdots>x_n>y_1>\cdots >y_n.$

In \cite[Theorem 3.4]{CR}, it was proved that a graph $G$ is closed if and only if the associated binomial ideal $J_G\subset S_G=K[x_1,\ldots,x_n,y_1,\ldots,y_n]$ has a quadratic Gr\"obner basis with respect to some monomial order in the given coordinates of $(S_G)_1$. 

Later on, it was discovered that closed graphs are actually proper interval graphs (PI graphs in brief) which are known in combinatorics for a long time. For the original definition of the PI graphs and various characterizations of them we refer the reader to 
\cite{Cru, CR, Fi, Ga,  H,  LO, R}. In this paper, we will use the closed graph terminology.

It is easily seen  that any closed graph is chordal, that is, it has no induced cycle of length greater than $3$ and claw-free which means that  it has no induced graph isomorphic to the graph on the vertex set $[4]$ with edges $\{1,2\},\{1,3\},\{1,4\}$. In addition, if 
$G$ is closed, then any induced subgraph of $G$ must be closed. On the other hand, one may easily show that  the graphs of 
Figure~\ref{H1andH2} are not closed, but they are chordal and claw-free. These two graphs will play an important role in the combinatorial characterization of closed graphs which we are going to use in the next section.

\begin{figure}[hbt]
\begin{center}
\psset{unit=0.8cm}
\begin{pspicture}(1,1)(5,7)
\rput(-3,1){
\pspolygon(2,2)(3,3.71)(4,2)
\psline(3,3.71)(3,5.2)
\psline(0.6,1.1)(2,2)
\psline(4,2)(5.6,1.1)
\rput(2,2){$\bullet$}
\rput(3,3.71){$\bullet$}
\rput(4,2){$\bullet$}
\rput(3,5.2){$\bullet$}
\rput(0.6,1.1){$\bullet$}
\rput(5.6,1.1){$\bullet$}
\rput(3.3,0.4){$H_1$}
}
\rput(4,1.5){
\psset{unit=1.5cm}
\pspolygon(0.5,0.5)(1.5,0.5)(1,1.4)
\pspolygon(1.5,0.5)(2,1.4)(2.5,0.5)
\pspolygon(1,1.4)(1.5,2.3)(2,1.4)

\rput(0.5,0.5){$\bullet$}
\rput(1.5,0.5){$\bullet$}
\rput(1,1.4){$\bullet$}
\rput(2,1.4){$\bullet$}
\rput(2.5,0.5){$\bullet$}
\rput(1.5,2.3){$\bullet$}
\rput(1.5,-0.06){ $H_2$}
}
\end{pspicture}
\end{center}
\caption{Non-closed graphs}
\label{H1andH2}
\end{figure}
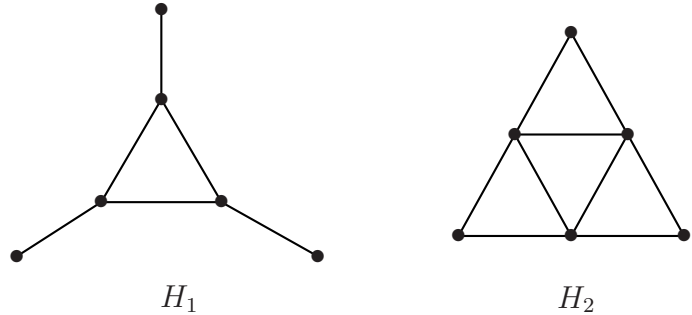

By a theorem of Dirac \cite{D}, any chordal graph $G$ has a perfect elimination order which means that its vertices can be labeled with 
the numbers $1,\ldots,n$ such that for every $j,$ the set $C_j=\{i: i<j \text{ and }\{i,j\}\in E(G)\}$ is a clique of $G.$ We recall that a clique of a graph $G$ means a complete subgraph of $G.$ The set of cliques of a graph $G$ forms a simplicial complex $\Delta(G)$ which is called the clique complex of $G.$ Its facets are the maximal cliques of $G.$ By using Dirac's theorem and an inductive argument, in \cite{EHH3},  the following characterization of closed graphs was given.

\begin{Theorem}\cite{EHH3}
\label{characterization1} Let $G$ be a graph on $[n]$. The following conditions are equivalent:
\begin{enumerate}
\item[{\em (a)}] $G$ is closed;
\item[{\em (b)}] there exists a labeling of $G$ such that all facets of $\Delta(G)$ are intervals $[a,b]\subset [n]$.
\end{enumerate}
Moreover, if the equivalent conditions hold and the facets $F_1,\ldots,F_r$ of $\Delta(G)$ are labeled such that $\min(F_1)<\min(F_2)<\cdots <\min(F_r)$, then $F_1,\ldots,F_r$ is a leaf order of $\Delta(G)$.
\end{Theorem}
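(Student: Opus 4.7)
\medskip

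\noindent\textbf{Proof proposal.} The direction (b)~$\Rightarrow$~(a) is a direct check: given a labeling in which every facet is an interval, take indices $i<j<k$ with $\{i,j\},\{i,k\}\in E(G)$. The edge $\{i,k\}$ lies in some facet $F=[a,b]$ with $a\le i<k\le b$, so $j\in F$ and, since $F$ is a clique, $\{j,k\}\in E(G)$; the case $i>j>k$ is symmetric.

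For (a)~$\Rightarrow$~(b), I would first reduce to the connected case. Since the closed condition only involves triples of vertices inside a single connected component of $G$, any closed labeling remains closed after permuting the labels of distinct components as blocks while preserving the within-component relative order. Thus one may arrange the labels so that each connected component occupies a contiguous interval, and since every facet of $\Delta(G)$ lies inside one component, it is enough to produce an interval-facet labeling for each component separately.

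Assume then that $G$ is connected and closed with closed labeling $1,\ldots,n$. The plan is to prove two auxiliary facts: (L1) $\{i,i+1\}\in E(G)$ for every $1\le i<n$, and (L2) whenever $\{a,b\}\in E(G)$ with $a<b$, one has $\{a,c\},\{c,b\}\in E(G)$ for every $a<c<b$. Granted these, the interval property of facets follows at once: for any facet $F$ with $\min F=a$ and $\max F=b$, (L2) applied to the edges joining each $d\in F$ to $a$ or $b$ forces every $c\in(a,b)$ to be adjacent to all of $F$, whence $c\in F$ by maximality. The main obstacle is (L1), which I would establish by induction on $n$ using Dirac's theorem. Closedness forces $N(n)$ to be a clique (apply the closed condition to any two edges $\{j,n\},\{k,n\}$ with $j<k<n$), so $n$ is simplicial; in particular $n$ is not a cut vertex (otherwise two of its neighbors would lie in distinct components of $G\setminus\{n\}$, contradicting simpliciality), so $G\setminus\{n\}$ is connected and closed on $[n-1]$ and the inductive hypothesis yields $\{i,i+1\}\in E$ for $i\le n-2$. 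Setting $m=\max N(n)$ (nonempty since $G$ is connected and $n\ge 2$), if $m<n-1$ then the closed condition applied to $\{m,m+1\}$ and $\{m,n\}$ would produce $\{m+1,n\}\in E$, contradicting the maximality of $m$; hence $m=n-1$, establishing (L1). Lemma~(L2) then follows by induction on $b-a$: the edges $\{a,a+1\}$ and $\{b-1,b\}$ provided by (L1) combine with closedness to yield $\{a+1,b\}$ and $\{a,b-1\}$, and the inductive hypothesis applied to these shorter edges supplies the required intermediate adjacencies.

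Finally, for the moreover assertion, write the facets as $F_i=[a_i,b_i]$ with $a_1<\cdots<a_r$; maximality forces $b_1<\cdots<b_r$, and for every $i\ge 2$ and $j<i$ one has $F_j\cap F_i=[a_i,b_j]\subseteq[a_i,b_{i-1}]=F_{i-1}\cap F_i$, so $F_{i-1}$ is a branch of $F_i$ and the enumeration $F_1,\ldots,F_r$ is a leaf order of $\Delta(G)$.
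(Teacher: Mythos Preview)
The paper does not actually prove this theorem; it is quoted verbatim from the cited reference \cite{EHH3} and used as a tool in the proof of Theorem~\ref{closed}. So there is no ``paper's own proof'' to compare your argument against.

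That said, your proof is correct. The direction (b)~$\Rightarrow$~(a) is immediate as you observe. For (a)~$\Rightarrow$~(b), your reduction to the connected case is valid (the closed condition only constrains triples lying in a single component), and the two auxiliary lemmas (L1) and (L2) are the right backbone. Your proof of (L1) is fine: you show directly from the closed condition that $N(n)$ is a clique, hence $n$ is simplicial and not a cut vertex; the invocation of ``Dirac's theorem'' is therefore a bit of a red herring, since you never actually need the existence of a perfect elimination order --- you exhibit one by hand. The inductive proof of (L2) from (L1) is clean. The final step, showing that (L1)--(L2) force every facet to be an interval, is correct: any $c$ strictly between $\min F$ and $\max F$ is adjacent to every element of $F$ via (L2), so $c\in F$ by maximality. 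The leaf-order argument is also right; note that even when $F_{i-1}\cap F_i=\emptyset$ (which can happen if $G$ is disconnected under the chosen labeling), every $F_j\cap F_i$ with $j<i$ is then empty as well, so $F_{i-1}$ still serves as a branch.
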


For more information about clique complexes, leaf order, and algebraic aspects of Dirac's theorem, one may consult \cite[Section 9.2]{HHBook}.

We are now ready to give a new characterization of the closed graphs. As we have already mentioned in Introduction, an equivalent statement appears in \cite{Cru}, but with a completely different proof.

\begin{Theorem}\label{closed}
Let $G$ be a connected  graph. Then $G$ is closed if and only if it is chordal, claw--free, and  none of the graphs depicted in Figure \ref{H1andH2} is 
an induced subgraph of $G.$
\end{Theorem}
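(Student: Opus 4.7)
The forward direction is essentially bookkeeping. Closedness is hereditary on induced subgraphs: given a closed labeling $1<2<\dots<n$ of $G$ and an induced subgraph $H$ on $\{i_1<\dots<i_k\}$, relabeling by the position in this list preserves the defining implication, since the hypothesis ``$\{a,b\},\{a,c\}\in E(H)$ with $a<b<c$'' translates directly to the same hypothesis for the original indices. That a closed graph is chordal and claw-free is recorded in the paragraph preceding the theorem. Thus it remains to check that neither $H_1$ nor $H_2$ is closed. For $H_1$, let $a,b,c$ denote the triangle vertices with pendants $a',b',c'$; in any hypothetical labeling the closedness condition applied to each triple (triangle vertex, its pendant, another triangle vertex) forces each triangle vertex to receive the \emph{middle} label in that triple, hence the pendant to sit on the side of its own vertex opposite the other triangle vertex. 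Writing out this ``opposite side'' constraint for the middle of the three triangle vertices (with respect to its two triangle neighbors) yields an immediate contradiction. The argument for $H_2$ is analogous, using the central triangle $\{B,D,E\}$ and the three outer simplicial vertices $A,C,F$.

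For the backward direction, we argue by induction on $n=|V(G)|$, the cases $n\leq 3$ being trivial. For the inductive step, Dirac's theorem applied to the chordal graph $G$ provides a simplicial vertex. A simplicial vertex in a connected graph is never a cut vertex, so $G':=G\setminus\{v\}$ is connected, and it plainly inherits the other three hypotheses. By induction $G'$ is closed, and using Theorem~\ref{characterization1} we fix a labeling of $G'$ on $\{1,\dots,n-1\}$ whose facets are intervals $F_1,\dots,F_r$ ordered by minimum label. Let $C:=N_G(v)$, which is a clique of $G'$. The maximal cliques of $G$ are exactly $C\cup\{v\}$ together with those $F_i$ not contained in $C$.

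The heart of the argument is to show that $v$ can be chosen so that $C$ is a \emph{prefix} of $F_1$ (i.e.\ $C\subseteq F_1$ and $\min F_1\in C$) or a \emph{suffix} of $F_r$. Given such a choice, we insert $v$ with new label $0$ or $n$ (and shift the others by $1$ when required); a direct verification shows that $C\cup\{v\}$ becomes an interval, the remaining $F_i$ remain intervals, and the ordering by minimum is preserved, so Theorem~\ref{characterization1} applies and $G$ is closed.

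The main obstacle, and the step where the excluded subgraphs intervene, is establishing the existence of such a $v$. Assume for contradiction that no simplicial vertex of $G$ has a boundary neighborhood in the above sense. Picking a simplicial $v$ with $C\subseteq F_j$, the hypothesis forces the existence of facets $F_{j'},F_{j''}$ of $G'$ with $F_{j'}\cap F_j$, $F_{j''}\cap F_j$ on ``opposite sides'' of $C$ inside $F_j$, and vertices $u'\in F_{j'}\setminus F_j$, $u''\in F_{j''}\setminus F_j$ not adjacent to $v$. A short case split on the sizes $|C\cap F_{j'}\cap F_j|$ and $|C\cap F_{j''}\cap F_j|$ produces the induced obstruction: when both separators have size one, $v$, two vertices of $C$ and $u',u''$ together with a third branch (supplied either by a third facet at $F_j$ or by a further simplicial vertex with the same pathology) yield an induced $H_1$; when both are at least two, the analogous construction yields an induced $H_2$; and the remaining mixed case is excluded using the claw-free hypothesis applied at the vertex of $F_j$ lying in $C$ on the boundary. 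In every case we contradict the hypothesis, completing the induction.
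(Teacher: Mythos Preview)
Your inductive skeleton matches the paper's, but the reinsertion step has two real gaps. First, the ``prefix'' condition $C\subseteq F_1$ and $\min F_1\in C$ is too weak: it does not make $C\cup\{v\}$ an interval after labeling $v$ by $0$. Take $G'=K_5$ on $[5]$ and $v$ adjacent to $\{1,3\}$ (this $G$ is chordal, claw-free, with no induced $H_1$ or $H_2$); then $C=\{1,3\}$ satisfies your condition but $\{0,1,3\}$ is not an interval. What is actually needed is that $C$ be an interval, and arranging this requires relabeling vertices \emph{inside} the target facet. The paper does this explicitly, and it is where claw-freeness first enters even in the ``good'' case: when some $i_1\in C$ lies in $F_j\cap F_r$ with $j<r$, one uses claw-freeness to force $F_r\setminus F_j\subseteq C$, and only then permutes labels within $F_j\cap F_r$ without disturbing the other facets. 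Your write-up omits this entirely.

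Second, the contradiction step is not a proof. You assert that ``a short case split'' produces $H_1$ or $H_2$, but never name the six vertices, and the phrase ``a third branch supplied by a further simplicial vertex with the same pathology'' is unsubstantiated. In the paper this is the longest part of the argument: with $C\subsetneq F_i$ for an interior facet $1<i<r$, one splits on whether $F_{i-1}\cap F_{i+1}=\emptyset$, and in each branch explicitly exhibits a claw, an $H_1$ (built from a triangle $\{j,p,q\}$ with $j\in C$, $p\in F_{i-1}\cap F_i$, $q\in F_i\cap F_{i+1}$, together with $\min F_{i-1}$, $\max F_{i+1}$, and $v$), or an $H_2$ (from four specified triangles involving $\min F_{i-1}$, $\max F_{i-1}$, $\max F_{i+1}$, and $v$). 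Your case split on ``sizes of separators'' does not line up with these configurations. Finally, ``$v$ can be chosen'' is circular as written, since the interval facets $F_1,\dots,F_r$ already depend on which $v$ you deleted; the paper avoids this by fixing one simplicial vertex from the outset and carrying out the entire case analysis and relabeling with that fixed choice.
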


\begin{proof}
If $G$ is closed, then any induced subgraph of $G$ is closed as well, hence $G$ must be chordal, claw-free, and  none of the graphs $H_1$ and $H_2$ can be an induced subgraph 
of $G$ since they are not closed.

We prove the converse by induction on the number of vertices of $G.$ If $G$ has two vertices, the statement is trivial. We assume now 
that $G$ is a connected chordal claw-free graph on the vertex set $[n]$, with $n\geq 3,$ and that the converse is true for graphs with $n-1
$ vertices. Since 
$G$ is chordal,  we may choose a 
perfect elimination order on $G.$ Then the vertex labeled with $n$ is obviously a free vertex, that is, the vertex $n$ belongs to exactly 
one maximal clique of $G.$

Let $G^\prime$ be the restriction of $G$ to the vertex set $[n-1]$. Then $G^\prime$ is clearly chordal claw--free and has no induced 
subgraph isomorphic to $H_1$ or $H_2.$ We claim that $G^\prime$ is also connected. Indeed, if $G^\prime$ has at least two connected 
components, as $G$ is connected, it follows that the vertex $n$ must belong to at least two maximal cliques of $G$, contradiction. 
Therefore, we may apply the inductive hypothesis to $G^\prime$ and conclude that $G^\prime$ is closed. By Theorem~\ref{characterization1}, 
it follows that we may relabel the vertices of $G^\prime$ with labels from $1$ to $n-1$ such that the facets of $\Delta(G^\prime)$ are $F_1
=[a_1,b_1],\ldots,F_r=[a_r,b_r]$ with $1=a_1<a_2<\ldots<a_{r}<b_r=n-1.$ 

We have to show that $G$ is closed, that is, that $G$ satisfies the condition (b) of Theorem~\ref{characterization1}.The idea of the proof 
is very simple, but the details need some work. We have to figure out where the vertex labeled with $n$ may be "located" such that we do 
not violate the hypothesis on $G$   and next, we show that, for each such location of the vertex  $n,$ one may relabel all the vertices of 
$G$ such that condition (b) of Theorem\ref{characterization1} holds, hence $G$ is closed.

Let us first assume that $G^\prime$ itself is a clique. If the vertex $n$ of $G$ is adjacent to all the vertices of $G^\prime$, then $G$ 
is a clique as well, thus it is closed. If not, then we may relabel the vertices of $G^\prime$ such that those which are adjacent to the 
vertex $n$ of $G$ have the largest labels among $1,\ldots,n-1$. Then, we get $\Delta(G)=\langle [1,n-1],[a,n] \rangle$ for some $1<a\leq 
n-1,$ thus $G$ is a closed graph with two maximal cliques.

We consider now the case when $G^\prime$ has two maximal cliques, say, $\Delta(G^\prime)=\langle F_1=[1,b],F_2=[a,n-1] \rangle$ for some $1
<a\leq b< n-1.$ Let $i_1,\ldots, i_{\ell}\in [n-1]$ be the vertices of $G^\prime$ which belong to the maximal clique of $G$ which 
contains the vertex $n.$ In other words, $i_1,\ldots, i_{\ell}$ are all the  vertices adjacent to $n$ in $G.$ As $i_1,\ldots, i_{\ell}$ 
form also a clique, all these vertices must be contained in one of the two cliques of $G
^\prime.$ We may assume that $i_1,\ldots, i_{\ell}\in F_2.$ Otherwise, we reduce to this case by relabeling the vertices of $G^\prime$ as 
follows: $i\mapsto n-i$ for $1\leq i\leq n-1.$ If $\{i_1,\ldots, i_{\ell}\}=F_2,$ then $\Delta(G)=\langle [1,b],[a,n]\rangle$, hence $G$ 
is closed. 

Now we assume that $\{i_1,\ldots, i_{\ell}\}\subsetneq F_2.$  If all the vertices $i_1,\ldots, i_{\ell}$ are free in $F_2,$ then we may 
relabel all the  free vertices of $F_2$ such that $\{i_1,\ldots, i_{\ell}\}=\{n-1,n-2,\ldots,n-\ell\}.$ It follows that 
$\Delta(G)=\langle [1,a], [b,n-1],[n-\ell,n] \rangle$, thus $G$ is closed. We have to treat now the case when at least one of the vertices 
$i_1,\ldots, i_{\ell}$, let us say $i_1,$  belong to $F_1\cap F_2.$ If there is a free vertex  $k\in F_2$ which is not adjacent to $n,$ 
then we get an induced claw graph in $G$ with the edges $\{1,i_1\}, \{i_1,n\},\{i_1,k\}$ which is impossible. Therefore, all the free 
vertices of $F_2$ are contained in the set $\{i_1,\ldots, i_{\ell}\}.$ In this case we may relabel the vertices in the intersection 
$F_1\cap F_2$ such that the set $\{i_1,\ldots, i_{\ell}\}$ is an interval of the form $[c,n-1]$ where $a<c\leq b.$ Consequently, 
$\Delta(G)=\langle [1,b],[a,n-1],[c,n] \rangle$, thus $G$ is closed. Here we have to mention that any permutation of the labels of the intersection vertices does not modify the intervals in $G^\prime$. 

Finally, we discuss the case when $\Delta(G^\prime)$ has at least three facets, that is, the facets of $\Delta(G^\prime)$ are 
$F_1=[a_1,b_1],\ldots,F_r=[a_r,b_r]$ with $1=a_1<a_2<\cdots<a_{r}<b_r=n-1$ and $r\geq 3.$ Let, as before, $i_1,\ldots, i_{\ell}$ be the 
vertices adjacent to the vertex $n.$ Since $i_1,\ldots, i_{\ell}$ form a clique in $G^\prime,$ there must be a maximal clique of $G^\prime$
 which contains the set $\{i_1,\ldots, i_{\ell}\}.$
We distinguish two cases.

{\em Case 1.} $\{i_1,\ldots, i_{\ell}\}\subseteq F_r.$ 

If we have equality, then clearly $G$ is closed since 
$\Delta(G)=\langle F_1,\ldots,F_{r-1}, F_r\cup\{n\} \rangle$. 

Let now  $\{i_1,\ldots, i_{\ell}\}\subsetneq F_r.$ We proceed further as we 
have already done in the case when $G^\prime$ had two cliques. Indeed, if $i_1,\ldots, i_{\ell}$ are all free vertices of $F_r,$ then we 
may relabel the free vertices of $F_r$ such that $\{i_1,\ldots, i_{\ell}\}=\{n-\ell,n-\ell+1,\ldots,n-1\}.$ With respect to this new 
labeling, we get $\Delta(G=\langle F_1,\ldots,F_r, [n-\ell, n]\rangle),$ thus $G$ is closed. The difference to the preceding case when 
$\Delta(G^\prime)$ had two cliques consists in the fact that $F_r$ may have non-empty intersection with  several maximal cliques of 
$G^\prime$. We may choose the smallest integer 
$j$
such that there exists an index in the set $\{i_1,\ldots,i_\ell\}$, say $i_1,$ such that $i_1\in F_j\cap F_r.$ We claim that in this case, the set $F_r\setminus F_j$ must 
be contained in $\{i_1,\ldots, i_{\ell}\}.$ Indeed, let us assume that there exists $k\in F_r\setminus F_j$ such that $k$ is not adjacent 
to the vertex $n$ of $G.$ If follows that  $G$  has the induced claw with the edges $\{\min F_j, i_1\},\{i_1,n\},\{i_1,k\}$, which is 
impossible. Thus $F_r\setminus F_j\subset \{i_1,\ldots, i_{\ell}\}$. Then we may relabel (if necessary) the vertices of $F_j\cap F_r$ such 
that the set $\{i_1,\ldots, i_{\ell}\}$ is an interval of the form $[c,n-1]$ where $a_r < c\leq b_j.$ With respect to this new labeling, 
the maximal cliques of $G$ are the intervals $F_1,\ldots,F_{r-1}, F_r,$ and $F_{r+1}=[c,n]$, hence $G$ is  closed.

If $\{i_1,\ldots, i_{\ell}\}\subseteq F_1,$ then we may reduce to the case that we have just discussed by reversing the labels of $G^\prime$, namely:
$i\mapsto n-i$ for $1\leq i\leq n-1.$ Therefore it remains to discuss the following case.

{\em Case 2.} $\{i_1,\ldots, i_{\ell}\}\subseteq F_i$ for some $2\leq i\leq r-1.$ 

If we have equality, namely 
$\{i_1,\ldots, i_{\ell}\}= F_i$ and $F_{i-1}\cap F_{i+1}=\emptyset$, in other words, $F_i$ has free vertices,  then we may relabel the 
vertices of $F_i\cup\{n\}$ and of $F_{i+1},\ldots, F_r$ such that 
$\Delta(G)=\langle F_1=[a_1,b_1],\ldots,F_{i-1}=[a_{i-1},b_{i-1}], F^\prime_i=[a_i,b_i+1],F^\prime_{i+1}=[a_{i+1}+1, b_{i+1}+1],\ldots,F^
\prime_r=[a_r+1,b_r+1=n]\rangle$. The case when  $F_i$ has no free vertex, that is $F_{i-1}\cap F_{i+1}\neq\emptyset,$ and 
$\{i_1,\ldots, i_{\ell}\}= F_i$ cannot occur. Indeed, let $j\in F_{i-1}\cap F_{i+1}$ and set 
$p=\min\{t: j\in F_t\}$, $q=\max\{t:  j\in F_t\}.$ Then $G$ has the induced claw with edges $\{\min F_p,j\},\{j,n\},\{j,\max F_q\}$ as induced 
graph, which is impossible.

Let now $\{i_1,\ldots, i_{\ell}\}\subsetneq F_i.$ We split the rest of the proof into two subcases.

{\em Subcase 2 (a).} $F_{i-1}\cap F_{i+1}=
\emptyset.$ Let us first assume that there exists vertices $p\in F_{i-1}\cap F_i$ and $q\in F_i\cap F_{i+1}$ which are not adjacent to $n.$
Now we look at the possible neighbors of $n$. If there exists $j\in F_i\setminus(F_{i-1}\cup F_{i+1})$ such that $\{j,n\}\in E(G),$ then we get an induced subgraph of $G$ isomorphic to $H_1$ by choosing the triangle $\{j,p,q\}$ together with the 
edges $\{j,n\},\{\min F_{i-1},p\},\{q,\max F_{i+1}.\}$.  If there exists $j\in F_{i-1}\cap F_i$ such that $\{j,n\}\in E(G),$ then $G$ 
has the induced claw with edges $\{\min F_{i-1},j\}, \{j,n\},\{j,q\}.$ The case $j\in F_i\cap F_{i+1}$ is symmetric. 

Therefore, we have shown that if $F_{i-1}\cap F_{i+1}=\emptyset,$ then we must have $F_{i-1}\cap F_i\subset \{i_1,\ldots, i_{\ell}\}$ 
or $F_{i+1}\cap F_i\subset \{i_1,\ldots, i_{\ell}\}.$ Clearly, by symmetry, we may assume that $F_{i+1}\cap F_i\subset \{i_1,\ldots, i_{\ell}\}.$ If there exists $p\in F_i\cap F_{i-1}$ which is not adjacent to $n$, we get the induced claw of $G$ with edges $$\{p,\min(F_i\cap F_{i+1})\},\{n,\min(F_i\cap F_{i+1})\},\{\min(F_i\cap F_
{i+1}), \max F_{i+1}\}.$$ Thus, we have shown that 
$\{i_1,\ldots, i_{\ell}\}$ must contain $(F_i\cap F_{i-1})\cup (F_i\cap F_{i+1}).$ As $\{i_1,\ldots, i_{\ell}\}\subsetneq F_i.,$ there must exist a free vertex of $F_i$ which is not adjacent to $n.$ Then, we get the induced claw in $G$ with the edges 
$\{u,\min(F_i\cap F_{i+1})\},\{n,\min(F_i\cap F_{i+1})\},\{\min(F_i\cap F_{i+1}),\max F_{i+1}\}$. 

Summarizing, we have shown that if 
$F_{i-1}\cap F_{i+1}=\emptyset$, then $G$ must contain an induced graph isomorphic either to a claw or to $H_1$, which impossible.

{\em Subcase 2 (b).} $F_{i-1}\cap F_{i+1}\neq \emptyset.$ We will show that also this subcase cannot  occur.
If there exists a vertex 
$j\in F_{i-1}\cap F_{i+1}$ which is adjacent to $n,$ then $G$ has an induced claw with the edges $\{\min F_{i-1},j\},\{j,n\},\{j,\max F_{i+
1}\}$, contradiction. Consequently, $n$ cannot be adjacent to any vertex of $F_{i-1}\cap F_{i+1}.$ 

Let now $j\in (F_i \cap F_{i-1})\setminus F_{i+1}$ adjacent to $n.$ If there is no vertex adjacent to $n$ among the vertices of $F_i\cap F_{i+1}$, then we 
get the induced claw of $G$ with the edges $$\{\min F_{i-1},j\},\{j,n\},\{j,\max F_i\}.$$ This implies that all the vertices 
in the set $(F_i\cap F_{i+1})\setminus F_{i-1}$ must be adjacent to $n.$ but, in this case, we reach a contradiction in the following way. 
Let $t\in (F_i\cap F_{i+1})\setminus F_{i-1}.$ The induced subgraph of $G$ with the triangles 
$$\{\min F_{i-1},j,\max F_{i-1}\},\{j,\max F_{i-1},t\},\{\max F_{i-1},t,\max F_{i+1}\},\text{ and  }\{n,j,t\}$$ is isomorphic to $H_2,$ contradiction to the 
hypothesis on $G. $ 

We end this subcase and the whole proof by observing that the situation when we choose $j\in (F_i\cap F_{i+1})\setminus F_{i-1}$ adjacent to $n$ is symmetric 
to the above one.
\end{proof}

\section{Koszul pairs of graphs}
\label{two}

In this section we state and prove the main theorem of this paper.

\medskip

Let $m,n\geq 3$ be integers and $G_1,G_2$ graphs on the vertex sets $[m],[n],$ respectively. Let $X$ be the $(m\times n)$--matrix with entries $\{x_{pq}\}_{\stackrel{1\leq p\leq m}{ 1\leq q\leq n}}$ and $S=K[X]$ the polynomial ring over $K$ with indeterminates  
$\{x_{pq}\}_{\stackrel{1\leq p\leq m}{ 1\leq q\leq n}}$.
 Let $J_{G_1,G_2}$ be the binomial edge 
ideal of the pair $(G_1,G_2)$. As we have already mentioned, this ideal is generated by all the minors $p_{ef}=[ij|k\ell]$ of the generic 
matrix $X$ with $\{i,j\}\in E(G_1)$ and $\{k,\ell\}\in E(G_2).$ 
The pair $(G_1,G_2)$ is called {\em Koszul} if the algebra $R=S/J_{G_1,G_2}$ is Koszul.

To begin with, we notice that we may reduce the study of  Kozulness of the algebra $R$ to the case when both graphs are connected. 

Indeed, let $G_{11},\ldots,G_{1p}$ be the connected components of $G_1$ and $G_{21},\ldots,G_{2q}$  the connected components of $G_2$. Then, \[\frac{S}{J_{G_1,G_2}} \cong \bigotimes_{r,s}\frac{S_{rs}}{J_{G_{1r},G_{2s}}}, \] where $S_{rs}=K[\{x_{tu}:t \in V(G_{1r}), u \in V(G_{2s})\}]$ for all $1 \leq r \leq p, 1 \leq s \leq q.$

We know that $S/J_{G_1,G_2}$ is Koszul if and only if each factor $S_{rs}/J_{G_{1r},G_{2s}}$ is Koszul by \cite[Proposition 2.1]{EHH2}.

\begin{Theorem}\label{Koszul}
Let $G_1,G_2$ be two connected graphs on the vertex sets $[m], [n]$ respectively, where $m,n\geq 3.$ The following statements are equivalent:
\begin{itemize}
\item [(i)]  The pair of graphs $(G_1,G_2)$ is Koszul;
	\item [(ii)] $G_1$ is closed and $G_2$ is complete or vice-versa;
	\item [(iii)] $J_{G_1,G_2}$ has a quadratic Gr\"obner basis with respect to the lexicographic order, $\lex,$ induced by 
	$x_{11}>x_{12}>\cdots > x_{1n}>x_{21}>\cdots >x_{2n}>\cdots >x_{m1}>\cdots >x_{mn}$.
	\item [(iv)] $J_{G_1,G_2}$ has a quadratic Gr\"obner basis with respect to the reverse lexicographic order, $\rev,$ induced by
	$x_{1n}>x_{2n}>\cdots > x_{mn}>x_{1 n-1}>\cdots >x_{m n-1}>\cdots >x_{11}>\cdots >x_{m1}$.
	\item [(v)] The graded maximal ideal of $R$ has linear quotients with respect to the following order of its generators:
	\[
	\overline{x}_{m1},\overline{x}_{m-1,1},\ldots, \overline{x}_{11},\overline{x}_{m2},\ldots, \overline{x}_{12}, \ldots, \overline{x}_{mn},
	\ldots, \overline{x}_{1n}.
	\] 
\end{itemize}
\end{Theorem}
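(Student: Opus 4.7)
The plan is to organise the proof around the easy directions and one principal direction, namely (i) $\Rightarrow$ (ii). First, (ii) $\Leftrightarrow$ (iii) is already contained in \cite[Theorem~1.3]{EHHQ}. The standard fact that a quadratic Gr\"obner basis produces a Koszul algebra yields (iii) $\Rightarrow$ (i) and (iv) $\Rightarrow$ (i). Next, (v) $\Rightarrow$ (i) is immediate: the partial ideals generated by initial segments of the ordered list in (v), together with the colon ideals demanded by the linear-quotient property, form a Koszul filtration of $R$, whence Koszulness follows from \cite[Proposition~1.2]{CTV}. Finally, (iii) $\Leftrightarrow$ (iv) is easy: a direct inspection shows that for any generator $p_{ef}=x_{ik}x_{j\ell}-x_{i\ell}x_{jk}$ with $i<j$, $k<\ell$ the leading monomial is $x_{ik}x_{j\ell}$ in both the lex and the reverse-lex orders prescribed, so the two initial ideals coincide and one Gr\"obner property implies the other.

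For (ii) $\Rightarrow$ (v), I would assume $G_1$ is closed with the labeling produced by Theorem~\ref{characterization1} and $G_2=K_n$ is complete. For each variable $\overline{x}_{ij}$ in the prescribed order I would identify, according to the position of $(i,j)$ relative to the interval structure of $\Delta(G_1)$, a set of earlier variables $\overline{x}_{k\ell}$ generating the colon ideal $(L_{ij}:\overline{x}_{ij})$, where $L_{ij}$ is the ideal of preceding variables. The crucial tool is the family of Pl\"ucker-type relations $\overline{x}_{ik}\overline{x}_{j\ell}=\overline{x}_{i\ell}\overline{x}_{jk}$, available in $R$ whenever $\{i,j\}\in E(G_1)$ and for all $k<\ell$ (since $G_2$ is complete); the closedness of $G_1$ controls which rows $j$ may replace $i$ and makes each colon computation terminate with linear generators.

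For the main direction (i) $\Rightarrow$ (ii), I set up a hereditary reduction: if $(G_1,G_2)$ is Koszul, then so is $(G_1\setminus i,G_2)$ for any vertex $i$ of $G_1$, and symmetrically in the other argument. This uses that killing the linear forms $\overline{x}_{i1},\ldots,\overline{x}_{in}$ in $R$ yields the binomial edge ideal algebra of $(G_1\setminus i,G_2)$, and that these forms occupy a consecutive block in the Koszul filtration produced by (v). Specializing $G_2$ to a single edge then reduces the Koszulness question for $G_1$ to that of the binomial edge ideal $J_{G_1}$; by the results of \cite{EHH1} combined with Theorem~\ref{closed}, this forces $G_1$ to be chordal, claw-free, and to contain neither $H_1$ nor $H_2$ as an induced subgraph, hence closed; by symmetry so is $G_2$. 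It remains to rule out the case where both $G_1$ and $G_2$ are closed but neither is complete. Since any connected non-complete graph contains an induced $P_3$, the hereditary property reduces this to showing that the single pair $(P_3,P_3)$ is not Koszul, which I would verify by exhibiting a non-linear minimal second syzygy of $K$ over $R=S/J_{P_3,P_3}$, i.e.\ by computing $\beta^R_{2,3}(K)\neq 0$.

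The principal obstacle is twofold. First, the hereditary step requires a clean argument that passage to the quotient by $(\overline{x}_{i1},\ldots,\overline{x}_{in})$ preserves Koszulness in this two-parameter setting; the safest route is to establish (ii) $\Rightarrow$ (v) first and then use its Koszul filtration to identify the deletion ideal as producing a Koszul quotient. Second, the explicit non-Koszul computation for $(P_3,P_3)$ is the homological heart of the proof and must be carried out carefully; Theorem~\ref{closed} from Section~\ref{one} is what guarantees that this single small example actually rules out every remaining case.
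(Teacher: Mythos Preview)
Your overall architecture is close to the paper's, but the implication (i) $\Rightarrow$ (ii) has two genuine gaps.

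\medskip

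\textbf{Circularity in the hereditary step.} You propose to justify that deleting a vertex of $G_1$ preserves Koszulness by first proving (ii) $\Rightarrow$ (v) and then using the resulting Koszul filtration. But in the course of (i) $\Rightarrow$ (ii) you only know (i); you do not yet know (ii), so you cannot invoke (ii) $\Rightarrow$ (v). The paper avoids this by using algebra retracts: if $G_1'\subset G_1$ and $G_2'\subset G_2$ are induced subgraphs, then $K[Y]/J_{G_1',G_2'}$ is an algebra retract of $R$ (via \cite{Sara2}), and retracts of Koszul algebras are Koszul by \cite[Proposition~1.4]{OHH}. This is the clean, non-circular hereditary principle you need.

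\medskip

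\textbf{The net $H_1$ is Koszul.} The more serious gap is your claim that restricting $G_2$ to a single edge and invoking \cite{EHH1} together with Theorem~\ref{closed} forces $G_1$ to be closed. From \cite{EHH1} you get only that $G_1$ is chordal, claw-free, and contains no induced $H_2$ (since $H_2$ is not Koszul). You do \emph{not} get that $G_1$ has no induced $H_1$: the net $H_1$ is a Koszul graph, i.e.\ $S_{H_1}/J_{H_1}$ is Koszul, so the single-edge reduction cannot exclude it. This is precisely why the paper needs the extra Lemma~\ref{net}: one must use that $G_2$ has at least three vertices and pass to the retract $(H_1,K_3)$, which is shown to be non-Koszul (via $\beta_{5,6}^{T}(T')\neq 0$ for a suitable quotient). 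Without this step your argument does not close, and Theorem~\ref{closed} cannot be applied.

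\medskip

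A minor point: for the pair $(P_3,P_3)$ the obstruction is not a second syzygy. One has $\beta^R_{2,3}(K)=0$; the first non-linear Betti number is $\beta^R_{3,5}(K)=2$, as the paper records.

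\medskip

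On the side of (ii) $\Rightarrow$ (v), note that the paper does not carry out the explicit colon computations you sketch; it simply observes that (ii) gives a quadratic reverse-lex Gr\"obner basis and then cites \cite[Theorem~1.3]{EHH2}. Your direct approach would work too, but is unnecessary. Conversely, the paper proves (v) $\Rightarrow$ (ii) directly (rather than going through (i)), by exhibiting explicit non-linear minimal generators of specific colon ideals when the conclusion of (ii) fails.
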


\begin{proof} We first observe that (iii) and (iv) are obviously equivalent since, for $1\leq i<j\leq m$ and $1\leq k<\ell < n,$ we have
$\ini_{\lex}(x_{ik}x_{j \ell}-x_{i\ell} x_{jk}) = \ini_{\rev}(x_{ik}x_{j \ell}-x_{i\ell} x_{jk})$. This is because we do not change only the monomial order, but also the order of the variables.

Furthermore, (ii) and (iii) are equivalent by \cite[Theorem 1.3]{EHHQ}. Also, (iii)$\Rightarrow$(i) is a  known  general statement. 

In what follows we will prove (i)$\Rightarrow$(ii) and (ii)$\Leftrightarrow$(v). This will complete the whole proof of the theorem.

\subsection*{Proof of (i)$\Rightarrow$(ii)}

We begin by proving that at least one of the two graphs must be complete.

We first make the following remark. 
Let $G_1^\prime$ and $G_2^\prime$ be induced subgraphs of $G_1,G_2,$ respectively, and 
let $Y$ be the set of variables $x_{ij}$ with $i\in V(G_1^\prime)$ and $j\in V(G_2^\prime).$ Then, by the proof of \cite[Proposition 8]{Sara2}, it follows that 
$T=K[Y]/J_{G_1^\prime,G_2^\prime}$ is an algebra retract of $R=S/J_{G_1,G_2}$. Thus, by \cite[Proposition 1.4]{OHH}, $T$ must be a Koszul algebra, hence the pair $(G_1^\prime,G_2^\prime)$ is also Koszul. The same idea was used in \cite{EHH1} to show that an induced subgraph of a Koszul graph is also Koszul.

Let us assume that neither $G_1$ nor $G_2$ is complete. Then there exists two induced path subgraphs, $L_1\subset G_1$ and $L_2\subset G_2,$ each of them consisting  of two edges, say $E(L_1)=\{\{i,j\},\{j,k\}\}$ and $E(L_2)=\{\{p,q\},\{q,r\}\}.$ Let $Y$ be the subset of $X$ containing the variables $x_{ab}$ with  $a\in \{i,j,k\}$ and $ b\in \{p,q,r\}.$ Then, by the above remark, $T=K[Y]/J_{L_1,L_2}$ must be a Koszul algebra. But this is not true since $\beta_{35}^T(K)\neq 0$ as one may check by using Singular \cite{DGPS}. Indeed, the beginning of the resolution of $K$ over $T$ is the following:
\begin{verbatim}
           0     1     2     3     4     5
------------------------------------------
    0:     1     9    40   120   280   552
    1:     -     -     -     -     -     -
    2:     -     -     -     2    24   148
------------------------------------------
\end{verbatim}

Let us now assume that $G_2$ is complete. We  begin by proving that $G_1$ is Koszul. This will follow easily by applying again the above remark for the algebra retract $T^\prime=K[Z]/J_{G_1,f}$, where $f=\{1,2\}\in E(G_2)$ and $Z=\{x_{ij}:1\leq i\leq m, j\in \{1,2\}\}.$ Hence $T^\prime$ is Koszul. But $J_{G_1,f}$ is exactly the classical binomial edge ideal associated with $G_1$, thus $G_1$ is Koszul.

It remains to prove that $G_1$ is even closed. By \cite[Theorem 2.1]{EHH1}, it follows that $G_1$ is chordal and claw-free. 
In order to apply Theorem~\ref{closed} and derive the desired conclusion, it only remains to show that $G_1$ has no induced subgraph isomorphic to $H_1$ or $H_2.$

It is known that 
the graph $H_2$ of Figure~\ref{H1andH2} is not Koszul; see \cite[Page 133]{EHH1}. This implies that $G_1$ has no induced subgraph isomorphic to $H_2.$  Let us suppose that there exists  an induced subgraph of $G_1,$ say $G_1^\prime$ isomorphic to $H_1.$ Let $G_2^\prime$
be the complete subgraph of $G_2$ on the vertex set $[3].$ We denote by $U$ the set of all variables $x_{ij}$ with $i\in V(G^\prime_1)$ and 
$1\leq j\leq 3.$ Then, $T^{\prime\prime}=K[U]/J_{G_1^\prime,G_2^\prime}$ is an algebra retract of $R$ by the above remark. Hence 
$T^{\prime\prime}$ should be Koszul. But this is not true by the following lemma, hence $G_1$ has no induced subgraph isomorphic to $H_1$. This completes the proof of (i)$\Rightarrow$(ii).

\begin{Lemma}\label{net}
The pair of graphs $(H_1,K_3)$ is not Koszul.
\end{Lemma}

\begin{proof}
We label the vertices of $H_1$ as follows. We assign the labels $2,3,4$ to the vertices of the triangle. The additional edges are 
$\{1,2\}, \{3,5\},\{4,6\}.$ Let $H_1^\prime$ be the induced subgraph of $H_1$ on the vertex set $[5].$ $H_1^\prime$ is obviously a closed 
graph since it admits another labeling which is closed. Thus, the pair $(H_1^\prime,K_3)$ is Koszul since $J_{H_1^\prime, K_3}$ has a quadratic Gr\"obner basis by \cite[Theorem 1.3]{EHHQ}. 

Let 
$T=K[\{x_{ij}\}_{1\leq i\leq 6,\ 1\leq j\leq 3}]/J_{H_1,K_3}$ be the coordinate ring of the pair $(H_1,K_3)$ and 
$T^\prime=K[\{x_{ij}\}_{1\leq i\leq 5,\ 1\leq j\leq 3}]/J_{H^\prime_1,K_3}$ the coordinate ring of the pair $(H^\prime_1,K_3).$
Note that $T^\prime \cong T/(\bar{x}_{61},\bar{x}_{62},\bar{x}_{63})$ where $^-$ denotes the class modulo $J_{H_1,K_3}$. As $H^\prime_1$ 
is an induced graph in $H_1,$ it follows that $T^\prime$ is an algebra retract of $T.$ Therefore, by \cite[Proposition 1.4]{OHH}, it 
follows that $T$ is Koszul if and only if $T^\prime$ has a linear resolution over $T.$ But this is false since, as one may check with 
Singular, we have $\beta_{56}^T(T^\prime)=1\neq 0.$
\end{proof}

\subsection*{Proof of (ii)$\Leftrightarrow$(v)}

We begin with (ii)$\Rightarrow$(v). In the hypothesis (ii), by Theorem~\ref{Koszul}, it follows that $J_{G_1,G_2}$ has a quadratic Gr\"obner basis with respect to the reverse lexicographic order induced by
	$x_{1n}>x_{2n}>\cdots > x_{mn}>x_{1, n-1}>\cdots >x_{m, n-1}>\cdots >x_{11}>\cdots >x_{m1}$.
Then, by applying \cite[Theorem 1.3]{EHH3}, it follows that the sequence \[x_{m1},x_{m-1,1},\ldots,x_{11},x_{m2},\ldots,x_{12},\ldots,x_{mn},\ldots,x_{1n}\] has  linear quotients modulo $J_{G_1,G_2}$. 

(v)$\Rightarrow$(ii). We assume that \[ {x_{m1}},{x_{m-1,1}},\ldots,{x_{11}},{x_{m2}},\ldots,{x_{12}},\ldots,
{x_{mn}},\ldots,{x_{1n}} \] has linear quotients modulo $J_{G_1,G_2}$.

In the first step, we show that at least one of the two graphs must be complete. Let us assume that none of them is complete. Then there exist $L_i$ an induced path with $3$ vertices in $G_i$ for $i=1,2$.

Let $i < j < k$ be the vertices of $L_1$ and $p<q<r$ the vertices of $L_2$. We claim that $\overline{x}^2_{iq}\overline{x}_{kr}$ is a 
minimal generator of the ideal quotient \[Q=(\overline{x}_{m1},\dots,\overline{x}_{11},\ldots,\overline{x}_{mp},\ldots,\overline{x}_{kp},
\ldots,\overline{x}_{j+1,p}):\overline{x}_{jp}.\]
If we prove the above claim, we reach a contradiction to our hypothesis. It follows that at least one of the two graphs must be complete.

We have
\begin{eqnarray}\label{quadratic generator}
& \overline{x}_{iq}^2\overline{x}_{kr}\overline{x}_{jp} = \overline{x}_{iq}\overline{x}_{ip}\overline{x}_{jq}\overline{x}_{kr} =\\  \nonumber
& = \overline{x}_{iq}\overline{x}_{ip}\overline{x}_{jr}\overline{x}_{kq} = \\  \nonumber
& =\overline{x}_{ip}\overline{x}_{kq}\overline{x}_{ir}\overline{x}_{jq}= \\ \nonumber
& = \overline{x}_{kq}\overline{x}_{ir}\overline{x}_{iq}\overline{x}_{jp} = \\ \nonumber
& =\overline{x}_{ir}\overline{x}_{iq}\overline{x}_{jq}\overline{x}_{kp}. \nonumber
\end{eqnarray}


 Equality (\ref{quadratic generator}) yields $\overline{x}^2_{iq}\overline{x}_{kr} \in Q$. We have to show that $\overline{x}^2_{iq}\overline{x}_{kr}$ is a minimal generator of $Q$.
By assumption, $Q$ is generated by linear forms, hence if $\overline{x_{iq}^2}\overline{x_{kr}}$ is not a minimal generator, then there must  exist the forms $\overline{\ell},\overline{v}$ such that \begin{equation}\label{form} \overline{x}^2_{iq}\overline{x}_{kr} =\overline{l}\overline{v}, \end{equation} where $\overline{l}$ is a minimal generator of $\mathrm{Q}$ and $v \in S$ is a homogeneous polynomial of degree 2.

We consider the $\mathbb{Z}^{m+n}$-multigrading on $S$ by defining \[m-\deg(x_{ij})= \varepsilon_{i,j+m}, 1\leq i\leq m, 1\leq j\leq n,\] where $\varepsilon_{i,j+m}=\varepsilon_i + \varepsilon_{j+m}$, and $\varepsilon_k$ is the $k$-th element in the canonical basis of $\mathbb{Z}^{m+n}$.

The ideal $J_{G_1,G_2}$ and, consequently, the algebra $R = S/J_{G_1,G_2}$ are homogeneous with respect to this grading. Then the forms $\ell,v$ are also $\mathbb{Z}^{m+n}$-homogenous, thus, by equality (\ref{form}), it follows that \[m-\deg(\ell)\leq 2 \varepsilon_{i,q+m}+\varepsilon_{k,m+r}\] componentwise. The same holds for $v$.

Equation (\ref{form}) implies that \[x^2_{iq}x_{kr} - \ell v \in J_{G_1,G_2},\] thus,\begin{equation}\label{grade}x^2_{iq}x_{kr} -\ell v =
\sum_{e \in E(G_1), f \in E(G_2)}h_{ef} p_{ef},\end{equation} for some polynomial $h_{ef} \in S.$

In equality (\ref{grade}), we substitute $x_{uv}$ by $0$ for every pair $(u,v) \not\in \{i,j,k\} \times \{p,q,r\}.$ Te conditions on the multidegrees of $\ell$ and $v$ imply that $x_{iq}^2x_{kr} - v\ell \in J_{L_1,L_2}.$
On the other hand, $\overline{\ell}$ is a minimal generator of $Q$, hence \[\ell x_{jp}\in 
(J_{G_1,G_2},x_{m1},\ldots,x_{11},\ldots,x_{mp},\ldots,x_{kp},\ldots,x_{j+1,p}).\]

By using  again  the condition on the  multidegree of $\ell$, it follows that \[\ell x_{jp} \in (J_{L_1,L_2},x_{kp} )\] in the subring $S' = K[X']$, where $X' =
\left(\begin{array}{ccc}
x_{ip} & x_{iq} & x_{ir} \\
x_{jp} & x_{jq} & x_{jr} \\
x_{kp} & x_{kq} & x_{kr} 
\end{array}\right).
$
This implies that $\ell$ is a minimal generator of $(J_{L_1,L_2},x_{kp}):x_{jp}$ in $S'$. By using Singular, we may easily see that there is no minimal generator of $(J_{L_1,L_2},x_{kp}):x_{jp}$ which satisfies the multidegree inequality of $\ell$.

Therefore, we have proved that $Q$ does not have linear quotients. Consequently, at least one of the two graphs must be complete.

We first choose $G_2$ to be complete. We have to show that $G_1$ is closed with respect to its given labeling. By hypothesis, we know that 
for every $2\leq i\leq m,$ the ideal quotient 
\[
(\overline{x}_{m1},\ldots,\overline{x}_{11},\ldots, \overline{x}_{m, n-2},\ldots,\overline{x}_{1, n-2},\overline{x}_{m, n-1}\ldots, \overline{
x}_{i, n-1}): \overline{x}_{i-1, n-1}\]  is generated by linear forms in $R.$ This is equivalent to saying that 
\[
(x_{m1},\ldots,x_{11},\ldots, x_{m,n-2},\ldots,x_{1,n-2},x_{m,n-1},\ldots,x_{i,n-1},J_{G_1,G_2}): x_{i-1,n-1}
\]
is generated by linear forms modulo $J_{G_1,G_2}.$

We have 
\[
(x_{m1},\ldots,x_{11},\ldots, x_{m,n-2},\ldots,x_{1,n-2},x_{m,n-1},\ldots,x_{i,n-1},J_{G_1,G_2}): x_{i-1,n-1}=
\]
\[
(x_{m1},\ldots,x_{11},\ldots, x_{m,n-2},\ldots,x_{1,n-2})+(x_{m,n-1},\ldots,x_{i,n-1},J_{G_1,\{n-1,n\}}): x_{i-1,n-1}
\]
and the last term in the above sum is generated by linear forms modulo $J_{G_1,\{n-1,n\}}$ if and only if $G_1$ is closed with respect to its given labeling  by \cite[Theorem 1.6]{EHH2}.

It remains to consider the case when $G_1$ is complete. We have to show that $G_2$ is closed with respect to its given labeling. Assume that this is not the case and that there exist $i<j<k$ or $i>j>k$ such that $\{i,j\}, \{i,k\}\in E(G_2)$ and $\{j,k\}\notin E(G_2).$

It is enough to make the proof for  $i<j<k$ such that $\{i,j\}, \{i,k\}\in E(G_2)$ and $\{j,k\}\notin E(G_2)$ since the case 
$i>j>k$ is symmetric. We only need to exchange the roles of $i$ and $k.$ Let us consider the minor 
\[g=[m-2\ \ m-1|j k]=x_{m-2,j}x_{m-1,k}-x_{m-2,k}x_{m-1,j}.\] We observe that
\[
\overline{x}_{mi}\overline{g}=\overline{x}_{mi}\overline{x}_{m-2,j}\overline{x}_{m-1,k}-
\overline{x}_{mi}\overline{x}_{m-2,k}\overline{x}_{m-1,j}=
\]
\[
\overline{x}_{m-2,i}\overline{x}_{mj}\overline{x}_{m-1,k}-
\overline{x}_{m-1,i}\overline{x}_{mj}\overline{x}_{m-2,k}=0 (\mod (J_{G_1,G_2})).
\] 
In the above relations, we used that the minors $[m-2\ \ m|i j], [m-2\ \ m-1|i k]$ belong to $J_{G_1,G_2}$ since 
$\{i,j\}, \{i,k\}\in E(G_2).$ The above calculation shows that $\overline{g}$ belongs to ideal quotient
\[
Q=(\overline{x}_{m1},\ldots, \overline{x}_{11},\ldots,\overline{x}_{m,i+1},\ldots, \overline{x}_{1,i+1}): 
\overline{x}_{mi}. 
\] We claim that $\overline{g}$ is a minimal generator of $Q.$ This will then give a contradiction to our hypothesis and completes the 
proof for $i<j<k.$ The proof of the claim uses arguments similar to those of the previous part of this proof. Let us assume that 
$\overline{g}$ is not a minimal generator of $Q.$ Then there exist two linear forms, $\ell$ and $v$ such that 
$\overline{g}=\overline{\ell}\overline{v}$ with $\overline{\ell}$ a minimal generator of $Q.$ By multidegree considerations, we derive 
that $\ell$ is a minimal linear generator of $J_{G_1^\prime,G_2^\prime}:x_{mi}$ where $G_1^\prime$ is the restriction of $G_1$ to 
$\{m-2,m-1,m\}$ and $G_2^\prime$ is the restriction of $G_2$ to $\{i,j,k\}$. But  the ideal quotient 
$J_{G_1^\prime,G_2^\prime}:x_{mi}$ has no minimal linear generator as the following lemma shows. 
\end{proof}

The proof of this lemma uses standard arguments involving Gr\"obner basis theory, but we include all the details for the conveneince of the reader.

\begin{Lemma}
Let $ X=
\left(\begin{array}{ccc}
	x_1 & y_1 & z_1\\
	x_2 & y_2 & z_2\\
	x_3 & y_3 & z_3\\
\end{array}\right)
$ be a $3\times 3$-matrix of indeterminates and let $I\subset K[x_i,y_i,z_i: i=1,2,3]$ be the binomial ideal generated by the the following 
$2$-minors of $X:$ 
\[[12|12], [13|12], [23|12], [12|13], [13|13], [23|13].
\] Then $I:(x_3)=I_2(X)$ where $I_2(X)$ denotes the ideal generated by all $2$-minors of the matrix $X.$
\end{Lemma}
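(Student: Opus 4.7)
Plan. I would prove both inclusions of $I:(x_3)=I_2(X)$ separately, using the primality of the classical determinantal ideal for one direction and three explicit identities for the other.

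For the inclusion $I:(x_3)\subseteq I_2(X)$, the key point is the well-known fact that the ideal $I_2(X)$ of $2$-minors of a generic $3\times 3$ matrix is a prime ideal (its vanishing locus is the affine cone over the Segre variety of rank-one $3\times 3$ matrices). Every generator of $I$ is a $2$-minor of $X$, so $I\subseteq I_2(X)$; since $x_3\notin I_2(X)$, primality gives $I_2(X):(x_3)=I_2(X)$, and hence $I:(x_3)\subseteq I_2(X):(x_3)=I_2(X)$.

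For the reverse inclusion $I_2(X)\subseteq I:(x_3)$, I must check that $x_3\cdot g\in I$ for every $2$-minor $g$ of $X$. Six of the nine $2$-minors of $X$ already belong to $I$, so only the three minors not involving column $1$ require work, namely $[12|23]$, $[13|23]$, $[23|23]$. For these I would simply display the explicit identities
\begin{align*}
x_3\cdot[23|23] &= y_3\cdot[23|13]-z_3\cdot[23|12],\\
x_3\cdot[13|23] &= y_3\cdot[13|13]-z_3\cdot[13|12],\\
x_3\cdot[12|23] &= y_3\cdot[12|13]-z_2\cdot[13|12]+z_1\cdot[23|12],
\end{align*}
each of which is verified by a one-line expansion and exhibits the left-hand side as an element of $I$.

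The first two identities are symmetric and can be guessed immediately by seeking to cancel the factor $x_3$ using the generators with matching row indices. The third is slightly less evident; a good way to spot it is to work modulo $I$ in $R=S/I$, where the relations $\bar x_3\bar y_1=\bar x_1\bar y_3$ and $\bar x_3\bar y_2=\bar x_2\bar y_3$ give $\bar x_3\cdot\overline{[12|23]}=\bar y_3(\bar x_1\bar z_2-\bar x_2\bar z_1)=\bar y_3\cdot\overline{[12|13]}=0$, and lifting this computation back to $S$ produces precisely the displayed combination. I do not anticipate any real obstacle: one direction is an immediate consequence of a standard determinantal fact, and the other reduces to three routine verifications.
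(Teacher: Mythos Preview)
Your argument is correct. Both inclusions are established cleanly: primality of $I_2(X)$ together with $x_3\notin I_2(X)$ immediately gives $I:(x_3)\subseteq I_2(X)$, and your three displayed identities are all valid (the first two are Laplace expansions of the singular matrix obtained by repeating row~$3$, and the third follows from the Laplace expansion of $\det X$ along row~$3$ combined with its expansion along column~$3$, which expresses $\det X$ as an element of~$I$).

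Your route, however, differs from the paper's. The paper argues entirely via Gr\"obner bases: it computes the reduced Gr\"obner basis of $I$ with respect to the lexicographic order $x_1>y_1>z_1>\cdots>x_3>y_3>z_3$, finds that besides the six generators it contains $x_2[12|23]$, $x_3[12|23]$, $x_3[13|23]$, $x_3[23|23]$, and deduces $\ini_{\lex}(I):(x_3)=\ini_{\lex}(I_2(X))$. The inclusion $I_2(X)\subseteq I:(x_3)$ is then declared ``clear'' (it is, since the last three Gr\"obner elements are precisely $x_3$ times the missing minors), while the opposite inclusion is obtained by a standard initial-ideal argument: any $f\in I:(x_3)\setminus I_2(X)$ could be reduced so that no monomial of $f$ lies in $\ini_{\lex}(I_2(X))$, yet $x_3f\in I$ forces $\ini_{\lex}(f)\in\ini_{\lex}(I):(x_3)=\ini_{\lex}(I_2(X))$, a contradiction. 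Thus the paper never invokes primality of $I_2(X)$ and instead reduces everything to monomial-ideal equalities; your proof is more conceptual for the harder inclusion and more explicit for the easier one, while the paper's is self-contained within Gr\"obner-basis techniques and dovetails with the surrounding material.
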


\begin{proof}
A straightforward calculation shows that the reduced Gr\"obner basis of $I$ with respect to the lexicographic order induced by 
$x_1>y_1>z_1>x_2>y_2>z_2>x_3>y_3>z_3$ consists of the generators of $I$ together with the following binomials of degree $3$:
$x_2[12|23], x_3[12|23], x_3[13|23], x_3[23|23].$ In particular, this implies that $\ini_{\lex}(I):(x_3)=\ini_{\lex}(I_2(X))$ since the generators of $I_2(X)$ form the reduced Gr\"obner basis of $I_2(X)$ with respect to the above  lexicographic order.

Clearly, we have $I_2(X)\subseteq I:(x_3).$ Let us assume that there exists a polynomial $f\in I:(x_3)$ such that $f\not\in I_2(X).$
Reducing the polynomial $f$ modulo $I_2(X),$ we may assume that no monomial in the support of $f$ belongs to $\ini_{\lex}(I_2(X)).$ On the other hand, $x_3f\in I,$ thus $x_3\ini_{\lex}f \in I$ which implies that $\ini_{\lex} f\in \ini_{\lex}(I):(x_3)=\ini_{\lex}(I_2(X)),$ contradiction. Therefore, we have $I_2(X)= I:(x_3).$
\end{proof}

\medskip

Of course it is natural to ask whether any Koszul algebra defined by a binomial edge ideal associated to a pair of graphs has a Koszul filtration as it was introduced in \cite{CTV}. Some computer experiments give some hope that the following question may have a positive answer.

\begin{Question}
Let $G_1,G_2$ be two connected graphs on the vertex sets $[m], [n]$ respectively, where $m,n\geq 3,$ and $R=S/J_{G_1,G_2}.$ Assume that $R$ is Koszul. Does it admit a Koszul filtration?
\end{Question}

We end this section by proving a result inspired by \cite[Proposition 2.3]{EHH2}. First, we recall the definition of c-universally Koszul algebras from \cite{EHH2}.

\begin{Definition}{\em 
Let $R$ be a standard graded $K$--algebra. $R$ is called {\em c-universally Koszul} if the set  consisting of all ideals which are generated by subsets of the
variables is a Koszul filtration of $R$.}
\end{Definition}

\begin{Proposition}
Let $G_1,G_2$ be two connected graphs on the vertex sets $[m], [n]$ respectively, where $m,n\geq 3.$ Then $R$ is c-universally Koszul
if and only if $G_1$ and $G_2$ are complete graphs.
\end{Proposition}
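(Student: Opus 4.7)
My plan is to prove each direction separately, relying on Theorem~\ref{Koszul} to handle the non-trivial implication.

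For $(\Leftarrow)$, if both $G_1=K_m$ and $G_2=K_n$, then $J_{G_1,G_2}$ coincides with $I_2(X)$, the ideal of $2$-minors of the generic $m\times n$ matrix. The quotient $S/I_2(X)$ is well-known to be c-universally Koszul---for instance via its ASL structure on a distributive lattice, or by the classical arguments on determinantal rings underlying \cite[Proposition 2.3]{EHH2}---and I would simply quote this fact.

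For $(\Rightarrow)$, suppose $R$ is c-universally Koszul. Then $R$ is Koszul, so Theorem~\ref{Koszul} allows us (after renaming if needed) to assume $G_2$ is complete and $G_1$ is closed; we must show $G_1$ is complete. Suppose not: since $G_1$ is connected and not complete, a shortest-path argument yields three distinct vertices $i,j,k\in[m]$ with $i<k$, $\{i,j\},\{j,k\}\in E(G_1)$, but $\{i,k\}\notin E(G_1)$. Pick any $p<q$ in $[n]$ and consider the $2$-minor
\[
g=[ik|pq]=x_{ip}x_{kq}-x_{iq}x_{kp}.
\]
Substituting twice with the generators $[jk|pq]$ (giving $x_{jp}x_{kq}\equiv x_{jq}x_{kp}$ modulo $J_{G_1,G_2}$) and $[ij|pq]$ (giving $x_{ip}x_{jq}\equiv x_{iq}x_{jp}$) shows $\overline{x}_{jp}\,\overline{g}=0$ in $R$, hence $\overline{g}\in 0:_R\overline{x}_{jp}$.

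Next I would apply the Koszul filtration axiom to the principal ideal $I=(\overline{x}_{jp})\in\F$. As $J_{G_1,G_2}$ is generated in degree two, the only variable of $R$ lying in $I$ is $\overline{x}_{jp}$ itself, so the only proper subideal of $I$ in $\F$ is $(0)$; hence $0:_R\overline{x}_{jp}$ must itself lie in $\F$, i.e., be generated by a subset of variables. The argument closes with two nonvanishing checks. First, $\overline{g}\neq 0$: by the $\ZZ^{m+n}$-multigrading on $S$ recalled in the proof of Theorem~\ref{Koszul}, the only potential generator of $J_{G_1,G_2}$ of the same multidegree as $g$ would be $[ik|pq]$, which is excluded since $\{i,k\}\notin E(G_1)$. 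Second, $0:_R\overline{x}_{jp}$ contains no variable: if $\overline{x}_{st}\,\overline{x}_{jp}=0$, then the monomial $x_{st}x_{jp}$ would lie in $J_{G_1,G_2}\subseteq I_2(X)$, which is impossible because the prime ideal $I_2(X)$ contains no variable and hence no monomial. Together these contradict $\overline{g}\neq 0$, forcing $G_1$ to be complete; by symmetry, the same holds for $G_2$. The principal technical content is the pair of multidegree nonvanishing checks; the rest is routine bookkeeping inside the Koszul filtration axiom.
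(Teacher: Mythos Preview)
Your argument is correct, and the overall shape---produce a nonzero quadratic element in $0:_R\overline{x}_{ab}$ for some variable and contradict the requirement that this colon ideal be variable-generated---matches the paper's. There are two genuine differences worth noting. First, you pass through Theorem~\ref{Koszul} to assume one graph is already complete and the other closed before deriving the contradiction; the paper does not do this and argues directly that neither graph can fail to be complete (your invocation of the theorem is harmless but unnecessary, since your contradiction only uses one edge of the other graph, not its completeness). Second, and more interestingly, your endgame is cleaner: instead of the paper's multidegree reduction to a $2\times 3$ submatrix followed by an explicit computation of a colon ideal, you simply observe that $I_2(X)$ is prime and contains no variable, so $0:_R\overline{x}_{jp}$ contains no variable at all and hence, being in $\F$, must be zero. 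This replaces the paper's computational check by a one-line conceptual argument, at the cost of appealing to the primality of the generic determinantal ideal. Your citation for the $(\Leftarrow)$ direction should point to \cite[Proposition 2.3]{HHR} (strongly Koszul algebras) rather than \cite{EHH2}.
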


\begin{proof}
If $G_1$ and $G_2$ are complete graphs, then $J_{G_1,G_2}$ is the ideal of all the $2$--minors of the matrix $X=(x_{ij})$. This is exactly the defining ideal of the Segre product of the polynomial rings over $K$ in $m$ and, respectively, $n$ indeterminates. By \cite[Proposition 2.3]{HHR},
it follows that $R$ is strongly Koszul, therefore c-universally Koszul. 

For the converse, let us assume that $R$ 
is c-universally Koszul. We have to show that $G_1,G_2$ are complete graphs. Let us assume, for example,  that $G_2$ is not complete. By relabeling 
its vertices if necessary, we may assume that $\{1,2\},\{2,3\}\in E(G_2)$ while $\{1,3\}\notin E(G_2).$ With similar techniques to those used in the proof of Theorem~\ref{Koszul}, we find that $\overline{g},$ where $g=[m-1\ \ m|13],$ is a minimal generator of the ideal quotient 
$0: \overline{x}_{m2}$, hence we get a contradiction to our hypothesis. 

Indeed, let us assume that $\overline{g}$ is not a minimal generator of $0: \overline{x}_{m2}$. Thus, there exist two linear forms, $\ell$ 
and $v$, with $\ell$ a minimal generator of $0: \overline{x}_{m2}$ such that $\overline{g}=\overline{\ell}\overline{v}.$ We have 
$\overline{\ell}\overline{x}_{m2}=0$ if and only if $\ell x_{m2}\in J_{G_1,G_2}.$ As $m-\deg(\ell)\leq m-\deg(g)=
\varepsilon_{m-1}+\varepsilon_m+\varepsilon_{1+m}+\varepsilon_{3+m}, $ we may express $\ell x_{m2}$ as a linear combination of the generators of the ideal $J_{G_1^\prime, G_2^\prime}$ where $G_1^\prime$ consists of the edge $\{m-1,m\}$ of $G_1$ and $G_2^\prime$ 
is the restriction of $G_2$ to $\{1,2,3\},$ thus $G_2^\prime$ consists of two edges, namely $\{1,2\}, \{2,3\}.$ Thus, in the ring 
$S^\prime=K[\{x_{ij}: m-1\leq i \leq m, 1\leq j\leq 3\}],$ $\ell x_{m2}$ belongs to the ideal of $S^\prime$ generated by the minors 
$h_1=[m-1\ \ m|12], h_2=[m-1\ \ m|23].$ Therefore, $\ell$ is a linear generator of $(h_1,h_2):x_{m2}.$ But one easily checks with Singular, or by direct computation, that $(h_1,h_2):x_{m2}=(h_1,h_2,g),$ hence $(h_1,h_2):x_{m2}$ has no linear generator.
 \end{proof}

\section*{Acknowledgment}
We would like to thank the referee for the careful reading of the paper.

{}


\begin{thebibliography}{99}

\bibitem{CDR} A. Conca, E. De Negri, M. E.  Rossi, \emph{Koszul algebras and regularity}, in Commutative Algebra: Expository Papers Dedicated to David Eisenbud on the Occasion of His 65th Birthday, I. Peeva Ed, (2013), 285--315.

\bibitem{CTV} A. Conca, N. V. Trung, G. Valla, \emph{Koszul property for points in projective space}, Math. Scand. \textbf{89} (2001), 201--216

\bibitem{Cru} M. Crupi, \emph{Closed orders and closed graphs},  An. \c Stiin\c t. Univ. "Ovidius" Constan\c ta Ser. Mat. \textbf{24} (2) (2016),
159--167.

\bibitem{CR} M. Crupi, G. Rinaldo, {\em Binomial edge ideals with quadratic Gr\"obner bases}, Electron. J. 
Combin. {\bf 18} (2011), no. 1, \# P211.

\bibitem{DGPS}
W. Decker,  G.-M. Greuel, G.  Pfister, H. Sch{\"o}nemann, 
\newblock {\sc Singular} {4-1-0} --- {A} computer algebra system for polynomial computations.
\newblock {http://www.singular.uni-kl.de} (2016).

\bibitem{D} G. A. Dirac,  \emph{On rigid circuit graphs},   Abh. Math. Sem. Univ. Hamburg
{\bf 38} (1961), 71--76.

\bibitem{EH} V.~Ene, J.~Herzog, \textit{Gr\"{o}bner bases in commutative algebra},  Grad. Stud. Math. {\bf 130}, Amer. Math. Soc.,
Providence, RI 2012.

\bibitem{EHH3}  V.~Ene, J.~Herzog, T.~Hibi,  \textit{Cohen-Macaulay binomial edge ideals}, Nagoya Math. J. {\bf 204} (2011),  57--68.

\bibitem{EHH1} V. Ene, J. Herzog, T. Hibi, \emph{Koszul binomial edge ideals}, Bridging Algebra, Geometry, and Topology, Springer Proceedings in Mathematics \& Statistics, {\bf 96}, D. Ibadula, W. Veys (Eds.) Springer, 2014, 127--138. 

\bibitem{EHH2} V. Ene, J. Herzog, T. Hibi, \emph{Linear flags and Koszul filtrations}, Kyoto J. Math. {\bf 55} no.3 (2015), 517--530.

\bibitem{EHHQ} V. Ene, J. Herzog, T. Hibi, A. Qureshi, \emph{ The binomial edge ideal of a  pair of graphs}, Nagoya Math. J. {\bf 213}
 (2014), 105--125.

\bibitem{Fi} P. C. Fishburn, \emph{Interval graphs and interval order}, Discrete Math. {\bf 55} (1985), 135--149.

\bibitem{Ga} F. Gardi, \emph{The Roberts characterization of proper and unit interval graphs},
Discrete Math. {\bf 307}(22) (2007), 2906--2908.


\bibitem{H} G. Haj\'os, \emph{Uber eine Art von Graphen}, Internat. Math. Nachr. {\bf 11}(1957) page 65.

\bibitem{HHBook}
J.~Herzog, T.~Hibi, \textit{Monomial ideals},  Grad. Texts in Math. {\bf 260} Springer, London, 2010.

\bibitem{HHHKR} J.\ Herzog, T.\ Hibi, F.\ Hreinsd\'ottir, T.\ Kahle, J.\ Rauh, \textit{Binomial edge ideals and conditional independence statements}, Adv. Appl. Math. \textbf{45} (2010) 317--333.

\bibitem{HHR} J. Herzog, T. Hibi, G. Restuccia, \textit{Strongly Koszul algebras}, Math. Scand. {\bf 86}(2000), 161--178.

\bibitem{LO} P. J. Looges, S. Olariu, \emph{Optimal greedy algorithms for indifference graphs}, Comput. Math. Appl. \textbf{25} (1993), 15--25.

\bibitem{OHH} H. Ohsugi, J. Herzog, T. Hibi, \emph{ Combinatorial pure subrings},  Osaka J. Math. {\bf 37} (2000) 745--757.

\bibitem{P} S. B. Priddy, \emph{Koszul resolutions,} Trans. Amer. Math. Soc. {\bf 152} (1970), 39--60.

\bibitem{R} F. S. Roberts, {\em Indifference graphs}, In "Proof Techniques in Graph Theory" (F. Harary, ed.), Academic Press, New York (1969), 139--146.


\bibitem{Sara2} S. Saeedi Madani, D. Kiani, {\em On the binomial edge ideal of a pair of graphs}, Electron. J. 
Combin, {\bf 20} (2013), no. 1, \# P48.




\end{thebibliography}
\end{document}